\numberwithin{equation}{section}
\newtheorem{Lemma}[equation]{Lemma}
\newtheorem{Theorem}[equation]{Theorem}
\newtheorem{MainTheorem}{Theorem}
\theoremstyle{definition}  
\newtheorem{Remark}[equation]{Remark}
\newcommand\Comment[2][\relax]{\space\par\medskip\noindent%
   \fbox{\begin{minipage}{\textwidth}\textbf{Comment\ifx\relax#1\else---#1\fi}\newline%
        #2\end{minipage}}\medskip
}
\newcommand{\da}{{\downarrow}}
\def\pmod#1{\text{ }(\text{\rm mod } #1)\,}
\def\sgn{\mathtt{sgn}}
\newcommand{\res}{\operatorname{res}}
\newcommand{\Z}{\mathbb{Z}}
\def\eps{{\varepsilon}}
\def\phi{{\varphi}}
\newcommand{\F}{{\mathbb F}}
\newcommand{\la}{\lambda}
\newcommand{\de}{\delta}
\newcommand{\Mull}{{\tt M}}
\newcommand{\SSS}{{\sf S}}
\def\Parp{{\mathscr P}_p}
\def\Par{{\mathscr P}}
\def\k{\Bbbk}
\def\col{{\tt col}}
\def\row{{\tt row}}
  \gdef\set#1{\mathinner{\lbrace\,{\mathcode`\|"8000%
  \let|\midvert #1}\,\rbrace}}
\def\midvert{\egroup\mid\bgroup}
\colorlet{darkgreen}{green!50!black}
\tikzset{dots/.style={very thick,loosely dotted},
         greendot/.style={fill,circle,color=darkgreen,inner sep=1.5pt,outer sep=0},
         blackdot/.style={fill,circle,color=black,inner sep=1.5pt,outer sep=0},
         graydot/.style={fill,circle,color=gray,inner sep=1.1pt,outer sep=0}
}
\def\greendot(#1,#2){\node[greendot] at(#1,#2){}}
\def\blackdot(#1,#2){\node[blackdot] at(#1,#2){}}
\def\graydot(#1,#2){\node[graydot] at(#1,#2){}}
\newenvironment{braid}{
  \begin{tikzpicture}[baseline=6mm,black,line width=1pt, scale=0.32,
                      draw/.append style={rounded corners},
                      every node/.append style={font=\fontsize{5}{5}\selectfont}]%
  }{\end{tikzpicture}
}
\def\Grid(#1,#2){
  \draw[very thin,gray,step=2mm] (0,0)grid(#1,#2);
  \draw[very thin,darkgreen,step=10mm] (0,0)grid(#1,#2);
}
\newcommand\Tableau[2][\relax]{
  \begin{tikzpicture}[scale=0.5,draw/.append style={thick,black}]
    \ifx\relax#1\relax%
    \else 
      \foreach\box in {#1} { \filldraw[blue!30]\box+(-.5,-.5)rectangle++(.5,.5); }
    \fi
    \newcount\row\newcount\col
    \row=0
    \foreach \Row in {#2} {
       \col=1
       \foreach\k in \Row {
          \draw(\the\col,\the\row)+(-.5,-.5)rectangle++(.5,.5);
          \draw(\the\col,\the\row)node{\k};
          \global\advance\col by 1
       }
       \global\advance\row by -1
    }
  \end{tikzpicture}
}
\newcommand\YoungDiagram[2][\relax]{
  \begin{tikzpicture}[scale=0.5,draw/.append style={thick,black}]
    \ifx\relax#1\relax%
    \else 
    \foreach\box in {#1} {
      \filldraw[blue!30]\box rectangle ++(1,1);
    }
    \fi
    \newcount\row
    \row=0
    \foreach \col in {#2} {
       \draw(1,\the\row)grid ++(\col,1);
       \global\advance\row by -1
    }
  \end{tikzpicture}
}
\newdimen\hoogte    \hoogte=12pt    
\newdimen\breedte   \breedte=14pt  
\newdimen\dikte     \dikte=0.5pt 
\newenvironment{Young}{\begingroup
       \def\vr{\vrule height0.89\hoogte width\dikte depth 0.2\hoogte}
       \def\fbox##1{\vbox{\offinterlineskip
                    \hrule height\dikte
                    \hbox to \breedte{\vr\hfill##1\hfill\vr}
                    \hrule height\dikte}}
       \vbox\bgroup \offinterlineskip \tabskip=-\dikte \lineskip=-\dikte
            \halign\bgroup &\fbox{##\unskip}\unskip  \crcr }
       {\egroup\egroup\endgroup}
\def\Youngdiagram#1{\relax\ifmmode\vcenter{\,\begin{Young}#1\end{Young}\,}\else%
              $\vcenter{\,\begin{Young}#1\end{Young}\,}$\fi}
\begin{document}

\title[Dimensions of irreducible representations of symmetric groups]{{\bf Lower bounds for dimensions of irreducible representations of symmetric groups}}

\author{\sc Alexander Kleshchev}
\address{Department of Mathematics\\ University of Oregon\\Eugene\\ OR 97403, USA}
\email{klesh@uoregon.edu}

\author{\sc Lucia Morotti}
\address
{Institut f\"{u}r Algebra, Zahlentheorie und Diskrete Mathematik\\ Leibniz Universit\"{a}t Hannover\\ 30167 Hannover\\ Germany} 
\email{morotti@math.uni-hannover.de}

\author{\sc Pham Huu Tiep}
\address
{Department of Mathematics\\ Rutgers University\\ Piscataway\\ NJ~08854, USA} 
\email{tiep@math.rutgers.edu}

\subjclass[2010]{20C30, 20C20}

\thanks{The first author was supported by the NSF grant DMS-1700905 and the DFG Mercator program through the University of Stuttgart. The second author was supported by the DFG grant MO 3377/1-1 and the DFG Mercator program through the University of Stuttgart. The third author was supported by the NSF (grants DMS-1839351 and DMS-1840702), and the Joshua Barlaz Chair in Mathematics.
This work was also supported by the NSF grant DMS-1440140 and Simons Foundation while all three authors were in residence at the MSRI during the Spring 2018 semester.}

\thanks{The authors are grateful to the referee for careful reading and helpful comments on the paper.}

\begin{abstract}
We give new, explicit and asymptotically sharp, lower bounds for dimensions of irreducible modular representations of finite symmetric groups.  
\end{abstract}

\maketitle

\section{Introduction}

Let $\F$ be a field of characteristic $p> 0$. 
We denote by $\Par(n)$ the set of all partitions of $n$ and by $\Par_p(n)$ the set of all $p$-regular partitions of $n$, see \cite{JamesBook}. Given a partition $\mu=(\mu_1,\mu_2,\dots)\in\Par(m)$ and $n\in\Z_{\geq m+ \mu_1}$, we denote 
$$
(n-m,\mu):=(n-m,\mu_1,\mu_2,\dots)\in\Par(n).
$$
Let $\SSS_n$ be the symmetric group on $n$ letters, and denote by $D^\la$ the irreducible $\F\SSS_n$-module corresponding to a $p$-regular partition $\la$ of $n$, see \cite{JamesBook}. In \cite{JamesDim}, James gave sharp lower bounds for $\dim D^{(n-m,\mu)}$ for $m\leq 4$, and here we obtain asymptotically sharp lower bounds for all $m$.  

Set
$$
\de_p:=
\left\{
\begin{array}{ll}
0 &\hbox{if $p\neq 2$,}\\
1 &\hbox{if $p=2$.}
\end{array}
\right.
$$
For integers $m\geq 0$ and $n$ we define the rational numbers
\begin{align*}
C_{m}^p(n)&:=
p^m\binom{n/p-\de_p}{m}
\\
&
=\frac{1}{m!}\prod_{i=0}^{m-1}(n-(\de_p+i)p)
\\
&
=
\left\{
\begin{array}{ll}
\frac{n(n-p)(n-2p)\cdots(n-(m-1)p)}{m!} &\hbox{if $p>2$,}\\
\frac{(n-p)(n-2p)\cdots(n-mp)}{m!}  &\hbox{if $p=2$.}
\end{array}
\right.
\end{align*}
Our first main result develops \cite{JamesDim} as follows:

\begin{MainTheorem}\label{TA}
Let $m\geq 4$, $p$ a prime, $n\geq p(\de_p+m-2)$, and let $\mu\in\Par_p(m)$. Then for 
$\la:=(n-m,\mu)\in\Par_p(n)$ we have  
$$\dim D^\la\geq C_{m}^p(n).$$ 
\end{MainTheorem}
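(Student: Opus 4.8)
The plan is to argue by a double induction: an outer induction on $m$, with base case $m=4$ supplied by James \cite{JamesDim}, and for each fixed $m$ an inner induction on $n$ proceeding in steps of $p$. The arithmetic backbone is the elementary identity
$$C_m^p(n)=C_m^p(n-p)+p\,C_{m-1}^p(n-p),$$
which follows immediately from $\binom{x}{m}-\binom{x-1}{m}=\binom{x-1}{m-1}$ applied to $C_m^p(n)=p^m\binom{n/p-\de_p}{m}$. Whenever $p(\de_p+m-2)\leq n<p(\de_p+m-1)$ the single factor $n-(\de_p+m-1)p$ is nonpositive while all others are nonnegative, so $C_m^p(n)\leq 0$ and the asserted inequality is vacuous; this range of width $p$ (covering all residues) serves as the base of the inner induction. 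It therefore suffices to prove, for $n\geq p(\de_p+m-1)$, the increment estimate
$$\dim D^{(n-m,\mu)}-\dim D^{(n-p-m,\mu)}\;\geq\; p\,C_{m-1}^p(n-p),$$
since combining it with the inner inductive hypothesis $\dim D^{(n-p-m,\mu)}\geq C_m^p(n-p)$ and the identity yields the theorem.

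The mechanism producing this increment is the modular branching rule. Since restriction preserves dimension, $\dim D^{(n-m,\mu)}$ equals the dimension of the restriction of $D^{(n-m,\mu)}$ to $\SSS_{n-p}$, and it suffices to exhibit enough composition factors of that restriction. For $\la=(n-m,\mu)$ the removable nodes split into two kinds: the node at the end of the first row, whose removal yields $(n-1-m,\mu)$ of the same shape with $n$ decreased, and the removable nodes of the fixed tail $\mu$, whose removal yields $(n-m,\mu^-)$ with $\mu^-\in\Par_p(m-1)$. Stripping the end of the first row at every step removes one box of each residue and carries $D^{(n-p-m,\mu)}$ down to $\SSS_{n-p}$ with multiplicity at least one, accounting for the first term above. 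The branches in which one removal instead descends into the tail produce composition factors of the form $D^{((n-p)-(m-1),\mu^-)}$, which are again of the shape governed by $C_{m-1}^p$; bounding each by $\dim D^{((n-p)-(m-1),\mu^-)}\geq C_{m-1}^p(n-p)$ (outer induction, or James \cite{JamesDim} when $m-1\leq 4$) and showing their total multiplicity across a full period of residues $0,1,\dots,p-1$ is at least $p$ gives the second term, the factor $p$ matching the identity.

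The main obstacle is the faithful bookkeeping of this branching data in positive characteristic. One must check that the relevant nodes are \emph{normal}, so that the predicted $D^{\tilde e_i\la}$ genuinely occur as composition factors (and with the predicted multiplicities $\eps_i$), rather than merely appearing in the characteristic-zero branching; that the partitions $(n-p-m,\mu)$ and $((n-p)-(m-1),\mu^-)$ remain $p$-regular throughout the stated range of $n$; and, most delicately, that the tail-removal paths accumulate total multiplicity exactly $p$ over each period — the residue of the unique tail box being removable only at the matching step, so that the count of admissible interleavings must be controlled residue-by-residue. A secondary subtlety is that $C_m^p(n)-C_m^p(n-1)$ and $C_{m-1}^p(n)$ agree only to leading order, so a naive single-step argument leaves uncontrolled terms of degree $m-2$; it is precisely the batching of a whole period of $p$ steps, as dictated by the identity together with the hypothesis $n\geq p(\de_p+m-2)$, that forces these error terms to cancel and delivers the asymptotically sharp constant $1/m!$.
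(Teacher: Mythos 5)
Your arithmetic skeleton is sound: the identity $C_{m}^p(n)=C_{m}^p(n-p)+pC_{m-1}^p(n-p)$ is exactly the paper's Lemma~\ref{LBinom}(i), the observation that $C_m^p(n)\leq 0$ on the window $p(\de_p+m-2)\leq n\leq p(\de_p+m-1)$ is the paper's starting point, and induction via modular branching is the right idea. But the proof has a genuine gap at its core: the increment estimate $\dim D^{(n-m,\mu)}\geq \dim D^{(n-p-m,\mu)}+pC_{m-1}^p(n-p)$ is only asserted, and the one thing needed to establish it --- that the ``tail-removal'' composition factors of $D^\la\da_{\SSS_{n-p}}$ accumulate total multiplicity at least $p$ over a period --- is precisely what you defer as ``the main obstacle.'' This count is not a routine verification and is not uniformly true step-by-step: when $\la$ is Jantzen--Seitz the first restriction is irreducible and contributes \emph{no} tail factor at all, and intermediate partitions $\la^{(t)}$ can themselves be JS, so the naive tally of one tail branch per step can fall short of $p$. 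The paper rescues the JS case only through a specific residue coincidence ($\res B=\res A_0=\res A_p$) that forces $[D^{\la^{(p-1)}}\da_{\SSS_{n-p}}:D^{(\la_B)^{(p-1)}}]=2$ at the final step, yielding $(p-2)+2=p$; nothing in your outline produces this. Moreover, your stated reason for insisting on $p$-step batching is wrong: the paper proves (Lemma~\ref{LBinom}(ii), via the elementary inequality of Lemma~\ref{LIneq}) that the single-step bound $C_m^p(n)\leq C_m^p(n-1)+C_{m-1}^p(n-1)$ does hold for $n\geq p(\de_p+m-1)$, and this single-step inequality is what handles the generic (non-JS) case; the $p$-step identity is reserved for the JS case where a single step genuinely gives nothing extra.

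Two further omissions would also need repair. First, the degenerate shapes where your intermediate partitions $(n-j-m,\mu)$ fail to be $p$-regular (equivalently $\la_1-\la_2\leq p$, e.g.\ $\la=(m+1,m)$ or $(m+2,m)$ for $p=2$) must be treated separately; for the basic spin modules the bound is an exact equality, so there is no slack for an approximate multiplicity count, and the paper disposes of these by direct computation. Second, your base case $m=4$ is not simply ``supplied by James'': for $p=2$ and $n\in\{9,10\}$ James' polynomial $(n^4-14n^3+47n^2-34n)/24$ is smaller than $C_4^2(n)$, and the paper has to fall back on the decomposition-matrix tables. As written, your proposal is a correct high-level plan whose load-bearing step is exactly the part left unproved.
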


Note that $C^p_m(n) \approx n^m/m!$ when $p,m$ are fixed and $n \to \infty$. Hence, 
in view of \cite[Theorem 1]{JamesDim}, the lower bound of  Theorem~\ref{TA} is asymptotically sharp. Theorem~\ref{TA} will be crucially used in \cite{KMTTwo}. 

While Theorem~\ref{TA} requires that $n$ is relatively large compared to $m$, we also prove the following universal lower bound which improves \cite[Theorem 5.1]{GLT}.

\begin{MainTheorem}\label{TB}
Let $p\geq 3$ and $\la\in\Par_p(n)$. 
Let $\la^\Mull=(\la^\Mull_1,\la^\Mull_2,\dots)$ be the $p$-regular partition determined from $D^\la\otimes \sgn\cong D^{\la^\Mull}$. Let 
$a$ be minimal such that $D^\la\da_{\SSS_{n-a}}$ contains a submodule of dimension $1$,  and let
$$k:=\max\{\la_1,\la^\Mull_1\},~~t:= \max\{n-k,a\}.$$ 
Then 
$$\dim D^\la\geq 2\cdot 3^{(t-2)/3}.$$
\end{MainTheorem}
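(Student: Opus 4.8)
The plan is to run a downward induction on $n$ via Kleshchev's modular branching rule, descending through good nodes while keeping the dimension growing as fast as $t=\max\{n-k,a\}$ requires. The first ingredient is a Lipschitz estimate: if $D^\mu=\tilde{e}_iD^\la$ is a good-node branching image, so that $D^\mu\hookrightarrow D^\la\da_{\SSS_{n-1}}$, then $t(\mu)\ge t(\la)-1$. Indeed, restricting this embedding shows any $1$-dimensional submodule of $D^\mu\da_{\SSS_{n-1-a(\mu)}}$ is one of $D^\la\da_{\SSS_{n-1-a(\mu)}}$, whence $a(\la)\le a(\mu)+1$; twisting the embedding by $\sgn$ gives $D^{\mu^\Mull}\hookrightarrow D^{\la^\Mull}\da_{\SSS_{n-1}}$, so $\mu^\Mull\subseteq\la^\Mull$ and $\mu_1^\Mull\le\la_1^\Mull$, while $\mu_1\le\la_1$ is clear; hence $(n-1)-k(\mu)\ge(n-k)-1$. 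Combining, $t(\mu)\ge t(\la)-1$. Since $x\mapsto 2\cdot 3^{(x-2)/3}$ is increasing, it therefore suffices to produce at each stage a chain of good-node removals along which the dimension grows by a factor at least $3^{1/3}$ for every unit that $t$ drops; amortised, this is a factor $3$ over every window of three steps.

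The engine is socle branching together with self-duality. Every $\F\SSS_n$-module $D^\la$ is self-dual, so $D^\la\da_{\SSS_{n-1}}$ is self-dual and its socle is isomorphic to its head; by Kleshchev's rule this common semisimple layer is $\bigoplus_i D^{\tilde{e}_i\la}$, the sum over the residues $i$ admitting a good node. Writing $g$ for the number of good nodes, we obtain
\[
\dim D^\la\;\ge\;\dim\soc\big(D^\la\da_{\SSS_{n-1}}\big)\;=\;\sum_i\dim D^{\tilde{e}_i\la}.
\]
If $g\ge 3$ this already gives $\dim D^\la\ge 3\dim D^\mu$ for the smallest child $\mu$, paying in a single step for the at most unit decrease of $t$; if $g=2$ it gives a factor $2>3^{1/3}$. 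The only delicate steps are those with $g=1$, in particular the Jantzen--Seitz partitions, where $D^\la\da_{\SSS_{n-1}}$ is irreducible and the dimension is preserved.

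To handle $g=1$ I would argue over a window of up to three consecutive removals. The subtlety, which must be faced squarely, is that a Jantzen--Seitz step preserves the dimension yet lowers $a$ by exactly one (the full one-step restriction coincides with $D^\mu$, so the depth to a $1$-dimensional submodule drops), and hence may lower $t$. The key observation is that such cheap steps cannot accumulate: an all-Jantzen--Seitz descent down to depth $a$ would force the irreducible $D^{\la^{(a)}}$ to contain, hence equal, a $1$-dimensional module, forcing $\dim D^\la=1$. Quantitatively, one shows that a maximal run of dimension-preserving steps is short and is compensated by a neighbouring step with $g\ge 2$: a step that strictly shortens the first row of $\la$ or of $\la^\Mull$, or that makes a $1$-dimensional constituent surface, forces through the reduced $i$-signature a second normal node somewhere in the window, and the resulting factor $2$ banked against the cheap steps yields growth at least $3$ across the three-step window whenever $t$ drops by $3$, and proportionally less otherwise. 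The base cases close the induction: the $1$-dimensional modules (a single row and its Mullineux image) have $t=0$, and $t\le 2$ gives $\dim D^\la\ge 2=2\cdot 3^{0}$.

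The main obstacle is exactly this amortised step. The clean facts---the estimate $t(\mu)\ge t(\la)-1$ and the one-step factors from $g\ge 2$---do not suffice by themselves, precisely because along a Jantzen--Seitz chain the dimension can stay constant while $t$ genuinely decreases through its $a$-component. What must be proved is that these low-growth stretches are short and are paid for by adjacent high-growth steps, and this requires the fine combinatorics of normal and good nodes: one must track how a single node removal simultaneously affects $\la_1$, $\la_1^\Mull$, and the trivial/sign-submodule depth $a$. It is exactly this coupling that the definition of $t$ as a maximum is designed to record, and controlling it uniformly is the crux of the argument.
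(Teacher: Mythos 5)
Your setup is sound and matches the paper's opening moves: the induction on good-node removals, the estimate $t(\mu)\ge t(\la)-b$ after removing $b$ good nodes (proved exactly as you indicate, via minimality of $a$ and $\mu^\Mull\subseteq\la^\Mull$), and the observation that a step with normal nodes of two distinct residues, or of one residue with multiplicity at least $2$, contributes a factor of at least $2>3^{1/3}$ in a single step. The base cases are also fine. But the Jantzen--Seitz case, which you yourself flag as ``the crux,'' is left unproved, and the quantitative mechanism you sketch for it cannot work as stated. You propose that a run of dimension-preserving steps is ``compensated by a neighbouring step with $g\ge 2$,'' banking ``the resulting factor $2$'' to get ``growth at least $3$ across the three-step window.'' A single factor of $2$ over a window in which $t$ drops by $3$ gives only $2\cdot 2\cdot 3^{(t-5)/3}=4\cdot 3^{(t-5)/3}$, strictly less than the required $2\cdot 3^{(t-2)/3}=6\cdot 3^{(t-5)/3}$; in fact a lone factor of $2$ already fails over a two-step window, since $4\cdot 3^{(t-4)/3}<2\cdot 3^{(t-2)/3}$. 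So the amortisation is arithmetically insufficient, not merely unverified.

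The paper's treatment of the JS case is correspondingly finer. It first invokes \cite[Lemma 3]{JamesDim}: if $\la$ is JS with good node $A$, then $\la_A$ has at least two normal nodes; three or more give a factor $3$ over two steps and one is done. With exactly two, it uses semisimplicity of $\F\SSS_2$ (this is where $p\ge 3$ enters) and the occurrence of both $D^{(2)}$ and $D^{(1^2)}$ in $D^\la\da_{\SSS_2}$ to force the two normal nodes of $\la_A$ to have \emph{distinct} residues and to split $D^\la\da_{\SSS_{n-2}}\cong D^\mu\oplus D^\nu$ as a direct sum, so that dimensions add; if at least one of $\mu,\nu$ is not JS this yields $2x+x=3x$ over the three-step window, which is the tight case of the bound. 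The genuinely hard sub-case --- both $\mu$ and $\nu$ JS, where only $x+x=2x$ is available --- does not appear in your proposal at all. The paper disposes of it for $p\ge 5$ by showing $D^\la\da_{\SSS_{n-3,3}}$ would have one of two very restricted shapes and then invoking the Brundan--Kleshchev classification \cite[Corollaries 3.9 and 4.3]{BK}, which forces $n\le 5$ or $p\mid n$ with $\la\in\{(n-1,1),(n-1,1)^\Mull\}$, cases handled directly; for $p=3$ it gives a separate four-case combinatorial analysis of normal nodes showing $\mu$ and $\nu$ cannot both be JS. These inputs --- James's lemma, the $\SSS_{n-2}\times\SSS_2$ splitting, and the Brundan--Kleshchev restriction theorems --- carry the real weight of the proof and are missing from your argument.
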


For $p=2$ we have the following result, which is a special case of Lemma~\ref{L1}:

\begin{MainTheorem}\label{TC}
Let $p=2$ and $\la\in\Par_2(n)$. Then $\dim D^\la\geq 2^{n-\la_1}$.
\end{MainTheorem}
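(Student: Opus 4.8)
The plan is to argue by induction on $n$, the base case being $\la=(n)$, for which $D^\la$ is the trivial module of dimension $1=2^{0}$. For the inductive step I assume $\la\in\Par_2(n)$ has at least two parts (necessarily distinct, as $p=2$) and restrict to $\SSS_{n-1}$. Writing $e_0,e_1$ for the two $i$-restriction functors indexed by the residues modulo $2$, we have $D^\la\da_{\SSS_{n-1}}=e_0D^\la\oplus e_1D^\la$, and since restriction preserves dimension, $\dim D^\la=\dim e_0D^\la+\dim e_1D^\la$. By Kleshchev's modular branching rules, whenever $\eps_i(\la)>0$ the module $e_iD^\la$ has simple socle $D^{\tilde e_i\la}$, where $\tilde e_i\la$ deletes from $\la$ its lowest normal node of residue $i$, and moreover $[e_iD^\la:D^{\tilde e_i\la}]=\eps_i(\la)$; hence $\dim e_iD^\la\ge\eps_i(\la)\,\dim D^{\tilde e_i\la}$.

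The decisive combinatorial observation is that the removable node $(1,\la_1)$ at the end of the first row is \emph{always} normal: in the top-to-bottom $i$-signature it is the very first symbol, and a removable node can only be cancelled by an addable node of the same residue lying strictly above it, of which there are none in row $1$. Let $i_1$ be the residue of $(1,\la_1)$; the observation gives $\eps_{i_1}(\la)\ge 1$, and it also pins down where the good node of residue $i_1$ sits.

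I then split on the value of $\eps_{i_1}(\la)$. If $\eps_{i_1}(\la)=1$, the unique normal $i_1$-node is $(1,\la_1)$ itself, so $\mu:=\tilde e_{i_1}\la$ removes a box from the first row; thus $\mu_1=\la_1-1$ and $(n-1)-\mu_1=n-\la_1$, and the inductive hypothesis applied to $\mu\in\Par_2(n-1)$ gives $\dim D^\la\ge\dim e_{i_1}D^\la\ge\dim D^\mu\ge 2^{\,n-\la_1}$. If instead $\eps_{i_1}(\la)\ge 2$, then since $(1,\la_1)$ is the topmost $i_1$-node, the lowest normal $i_1$-node lies below the first row, so $\mu:=\tilde e_{i_1}\la$ satisfies $\mu_1=\la_1$ and $(n-1)-\mu_1=(n-\la_1)-1$; the multiplicity bound then yields
$$\dim D^\la\ge\dim e_{i_1}D^\la\ge\eps_{i_1}(\la)\,\dim D^\mu\ge 2\cdot 2^{\,(n-\la_1)-1}=2^{\,n-\la_1}.$$
In both cases the asserted inequality holds.

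The one genuinely delicate step is the normality of the first-row node and the resulting dichotomy: it is exactly this that decides whether a single branching step removes a box from the first row (when $\eps_{i_1}(\la)=1$), thereby preserving the exponent $n-\la_1$, or removes a box from a lower row while supplying a multiplicity $\ge 2$ that compensates for the unit lost from the exponent. Everything else is routine: the inequality $\dim e_iD^\la\ge\eps_i(\la)\dim D^{\tilde e_i\la}$ is immediate from the branching rules, and $\mu=\tilde e_{i_1}\la$ is automatically $2$-regular since $\tilde e_{i_1}$ always returns a $p$-regular partition — which in the first case in fact forces $\la_1\ge\la_2+2$, so that no collision $\mu_1=\mu_2$ can occur.
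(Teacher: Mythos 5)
Your proof is correct and follows essentially the same route as the paper: a one-step restriction to $\SSS_{n-1}$, the identification of a suitable normal node whose removal either happens in the first row (preserving the exponent $n-\la_1$) or occurs in a lower row with multiplicity at least $2$ (compensating for the lost unit), and induction on $n$. The paper packages this slightly differently, deducing Theorem~\ref{TC} from the stronger bound $\dim D^\la\geq\prod_{i\geq p}\lceil i/(p-1)\rceil^{\la_i}$ of Lemma~\ref{L1}, valid for all $p$, by tracking the highest removable node whose removal preserves $p$-regularity together with its exact multiplicity, rather than the good node of the first-row residue as you do; but the underlying branching-rule dichotomy is the same.
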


\section{Main results}

\subsection{Preliminaries on modular branching rules}
In this subsection, we review modular branching rules for symmetric groups, which will be used below without further comment. The reader is referred to \cite{KBook,KBrII,KDec} for more details. 

We identify $\la\in\Par(n)$ and its Young diagram, which consists of nodes, i.e. elements of $\Z_{>0}\times\Z_{>0}$. Given any node $A=(r,s)$, its {\em residue} $\res A:=s-r\pmod{p}\in \Z/p\Z.$
For $i \in \Z/p\Z$ a node $A 
\in \la$ (resp. $B\not\in\la$) is called {\em $i$-removable} (resp. {\em $i$-addable}) for $\la$ 
if $\res A=i$ and $\la_A:=\la\setminus\{A\}$ (resp. $\la^B:=\la\cup\{B\}$) is a Young diagram of a partition.

Let $\la\in\Parp(n)$. Labeling the $i$-addable
nodes of $\la$ by $+$ and the $i$-removable nodes of $\la$ by $-$, the {\em $i$-signature} of 
$\la$ is the sequence of pluses and minuses obtained by going along the 
rim of the Young diagram from bottom left to top right and reading off
all the signs.
The {\em reduced $i$-signature} of $\la$ is obtained 
from the $i$-signature
by successively erasing all neighbouring 
pairs of the form $-+$. 
The nodes corresponding to  $-$'s in the reduced $i$-signature are
called {\em $i$-normal} for $\la$.
The leftmost $i$-normal node is called {\em $i$-good} . 
A node is called {\em removable} (resp. {\em normal}, {\em good}) if it is $i$-removable (resp. $i$-normal, $i$-good) for some $i$.
We denote 
$$\eps_i(\la):=\sharp\{\text{$i$-normal nodes of $\la$}\}.$$ 
If $\eps_i(\la)>0$, let $A$ be the $i$-good node of $\la$ and set
$
\tilde e_i \la:=\la_A.
$
Let $e_i$ be the {\em $i$-restriction functor} so that $V\da_{\SSS_{n-1}}=\bigoplus_{i\in\Z/p\Z}e_i V$ for any $\F\SSS_n$-module $V$. 

\begin{Lemma}\label{Lemma39}
Let $\lambda\in\Parp(n)$ and $i\in \Z/p\Z$. Then:
\begin{enumerate}
\item[{\rm (i)}]  $e_i D^\lambda\not=0$ if and only if $\eps_i(\lambda)>0$, in which case $e_iD^\lambda$ is a self-dual indecomposable module with socle and head both isomorphic to $D^{\tilde e_i\la}$.  
\item[{\rm (ii)}] Let $A$ be a removable node of $\la$ such that $\la_A$ is $p$-regular. Then $D^{\la_A}$ is a composition factor of $e_i D^\la$ if and only if $A$ is $i$-normal, in which case $[e_i D^\la:D^{\la_A}]$ is one more than the number of $i$-normal nodes for $\la$ above $A$. 
\end{enumerate}
\end{Lemma}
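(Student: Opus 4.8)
The plan is to read this statement as Kleshchev's modular branching rules for $\SSS_n$ and to assemble it from three layers: the soft functorial properties of $e_i$, the socle theorem identifying $\soc(e_i D^\la)$, and the refined count of $i$-normal nodes giving the composition multiplicities. Throughout I would use the contravariant duality $\circledast$ that fixes every irreducible $D^\mu$, together with the facts that $\da_{\SSS_{n-1}}$ is exact and splits as $\bigoplus_i e_i$ along residues.

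First I would set up the formal skeleton of (i). Projection onto a sum of blocks is exact, so each $e_i$ is exact; and since restriction commutes with $\circledast$ and the residue decomposition is $\circledast$-stable (duality preserves central characters), $e_i$ commutes with $\circledast$. As $D^\la$ is self-dual, $e_i D^\la$ is self-dual, whence $\head(e_i D^\la)\cong(\soc(e_i D^\la))^\circledast$. Thus once the socle is shown to be the self-dual simple $D^{\tilde e_i\la}$, the head must coincide with it, and a module with simple socle is automatically indecomposable (a nontrivial direct sum decomposition would force the socle to contain at least two simple summands). This reduces (i) to the two assertions $e_i D^\la\neq 0\iff\eps_i(\la)>0$ and $\soc(e_i D^\la)\cong D^{\tilde e_i\la}$ in that case.

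The heart of the matter is this socle theorem, which I would import from \cite{KBook,KBrII,KDec}. Its proof runs through Specht and dual-Specht filtrations of $D^\la\da_{\SSS_{n-1}}$, the Jantzen--Seitz criterion for the appearance of a simple top constituent, and an inductive analysis of the reduced $i$-signature; the conclusion is precisely that the unique top and bottom constituent of the $i$-part is $D^{\tilde e_i\la}$, governed by the $i$-good node, with something nonzero in the $i$-part exactly when an $i$-normal node exists, that is, $\eps_i(\la)>0$.

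For (ii) I would first restrict the candidate factors: any composition factor $D^\mu$ of $e_i D^\la$ has $\mu$ obtained from $\la$ by deleting a box of residue $i$, so among the $\la_A$ with $A$ removable and $\la_A$ $p$-regular only the $i$-removable $A$ can occur, and the branching combinatorics single out exactly the $i$-normal ones. The precise value---one more than the number of $i$-normal nodes lying above $A$---I would read off from the $\mathfrak{sl}_2$-categorification structure on the $i$-part (equivalently, from the divided-power functors $e_i^{(r)}$ and the resulting radical filtration of $e_i D^\la$): the $i$-normal nodes form a chain, and the factor $D^{\la_A}$ attached to a node with $b$ normal nodes above it occurs $b+1$ times, consistent with the socle factor $D^{\tilde e_i\la}$ at the $i$-good node carrying the full multiplicity $\eps_i(\la)$. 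The main obstacle is exactly the socle theorem together with this refined multiplicity count: these are the genuinely non-formal inputs, resting on the full strength of the branching machinery rather than on the exactness/duality bookkeeping that surrounds them, and everything else in the argument is just organizing those two facts into the stated form.
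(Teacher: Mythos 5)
The paper offers no proof of this lemma: it is stated as a known result, with the reader referred to \cite{KBook,KBrII,KDec}, so your write-up and the paper take the same route of deferring the substantive content to Kleshchev's branching-rule papers. Your surrounding bookkeeping (exactness of $e_i$, compatibility with the duality fixing each $D^\mu$, self-duality forcing $\head\cong\soc$, and indecomposability from a simple socle) is correct, and you rightly isolate the socle theorem and the $i$-normal-node multiplicity count as the genuinely non-formal imported inputs.
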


It follows easily from Lemma~\ref{Lemma39} that $D^\la\da_{\SSS_{n-1}}$ is  irreducible  if and only if the top removable node of $\la$ is its only normal node, in which case $\la$ is called a {\em Jantzen-Seitz} (or {\em JS}) partition, cf.  \cite{JS,k2}. 

\subsection{Properties of $C^p_m(n)$}

\begin{Lemma} \label{LIneq} 
For any $q\in{\mathbb R}_{\geq 1}$, $k\in\Z_{\geq 0}$ and $a\in{\mathbb R}_{\geq k}$ we have
$$
\prod_{i=0}^{k}(a-i)\leq \left(a-k+\frac{k}{q}\right)\prod_{i=0}^{k-1}\left(a-i-\frac{1}{q}\right).
$$
\end{Lemma}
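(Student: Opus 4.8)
The plan is to prove the stronger statement in which $1/q$ is replaced by an arbitrary real parameter $b\in[0,1]$; the case of interest is $b=1/q\in(0,1]$, and this is precisely where the hypothesis $q\geq 1$ is used. Introduce
$$
f(b):=\left(a-k+kb\right)\prod_{i=0}^{k-1}(a-i-b),
$$
so that the assertion becomes $\prod_{i=0}^{k}(a-i)\leq f(b)$. First I would dispose of the degenerate cases. If $k=0$ both sides equal $a$, so equality holds. If $a=k$, then $\prod_{i=0}^{k}(a-i)=0$, while $f(b)=kb\prod_{i=0}^{k-1}(k-i-b)\geq 0$ because every factor satisfies $k-i-b\geq 1-b\geq 0$. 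Hence I may assume $k\geq 1$ and $a>k$, and in this range every factor of $f(b)$ is strictly positive for all $b\in[0,1]$, since $a-k+kb\geq a-k>0$ and $a-i-b\geq a-k+1-b>0$.

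The crux consists of two observations. The first is a hidden symmetry: the two endpoints give \emph{equality}. A direct computation shows
$$
f(0)=(a-k)\prod_{i=0}^{k-1}(a-i)=\prod_{i=0}^{k}(a-i)=a\prod_{i=0}^{k-1}(a-i-1)=f(1).
$$
Thus it suffices to show that $f$ attains its minimum over $[0,1]$ at an endpoint. The second observation delivers exactly this: since $f>0$ on $[0,1]$, the logarithmic derivative
$$
\frac{f'(b)}{f(b)}=\frac{k}{a-k+kb}-\sum_{i=0}^{k-1}\frac{1}{a-i-b}
$$
is \emph{strictly decreasing} in $b$, because the first term decreases while each $1/(a-i-b)$ increases as $b$ grows. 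Consequently $f'$ changes sign at most once on $[0,1]$, and when it does so it passes from positive to negative.

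Therefore $f$ is unimodal (quasi-concave) on $[0,1]$: it is either monotone, or it increases and then decreases. In every case its minimum over the closed interval is attained at $b=0$ or $b=1$, and by the first observation both endpoint values equal $\prod_{i=0}^{k}(a-i)$. Hence $f(b)\geq\prod_{i=0}^{k}(a-i)$ for all $b\in[0,1]$, and specializing to $b=1/q$ proves the lemma. (Conceptually this is a manifestation of the Schur-concavity of the product map together with a majorization between the two factor multisets, which have equal sum; but tracking that majorization is awkward because the position of the special factor $a-k+kb$ varies with $b$, so the analytic route above is cleaner.) The step I expect to be the real obstacle is psychological rather than computational: one must resist trying to prove $f$ itself monotone, which it is not, and instead spot the symmetry $f(0)=f(1)$ and the monotonicity of $f'/f$, which together force unimodality and pin the minimum to the endpoints.
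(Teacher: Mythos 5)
Your proof is correct, but it takes a genuinely different route from the paper. The paper argues by induction on $k$: the inductive step peels off the factor $a-(k+1)$ and reduces everything to the single two\--term inequality
$$
(a-k-1)\left(a-k+\tfrac{k}{q}\right)\leq \left(a-k-\tfrac{1}{q}\right)\left(a-k-1+\tfrac{k+1}{q}\right),
$$
which is checked by direct expansion (the cross terms produce $\tfrac{k+1}{q}(1-\tfrac1q)\ge 0$ for $q\ge1$). You instead fix $k$ and $a$, interpolate in the parameter $b=1/q\in[0,1]$, observe the endpoint identity $f(0)=f(1)=\prod_{i=0}^k(a-i)$, and deduce unimodality of $f$ from the strict monotonicity of the logarithmic derivative $f'/f$, which pins the minimum to the endpoints. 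All the steps check out: the degenerate cases $k=0$ and $a=k$ are handled correctly, positivity of every factor on $[0,1]$ when $a>k$ and $k\ge1$ is justified (using $b\le 1$), and the sign\--change argument for $f'$ is sound since $f>0$. What your approach buys is a conceptual explanation of \emph{why} the inequality holds (an equal\--sum redistribution of the factors, with the extremes at $b=0,1$), and it cleanly isolates where $q\ge1$ enters ($b\le1$); what it costs is the use of calculus where the paper needs only an elementary polynomial verification, and the induction in the paper is arguably shorter to write down completely. Either proof is acceptable.
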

\begin{proof}
Induction on $k$. For inductive step, it suffices to check that 
$$
a-(k+1)\leq \left(a-k-\frac{1}{q}\right)\left(a-k-1+\frac{k+1}{q}\right)\left(a-k+\frac{k}{q}\right)^{-1},
$$
which is elementary. 
\end{proof}

\begin{Lemma} \label{LBinom} 
Let $m\geq 1$. Then:
\begin{enumerate}
\item[{\rm (i)}] $C_{m}^p(n)=C_{m}^p(n-p)+pC_{m-1}^p(n-p)$.
\item[{\rm (ii)}] If $n\geq p(\de_p+m-1)$ then $C_{m}^p(n)\leq C_{m}^p(n-1)+C_{m-1}^p(n-1)$.
\end{enumerate}
\end{Lemma}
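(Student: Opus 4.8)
The plan is to work throughout with the binomial form $C_m^p(n)=p^m\binom{n/p-\de_p}{m}$ supplied in the definition, treating the binomial coefficient as a polynomial in its upper argument, so that both Pascal's rule and the polynomial inequality of Lemma~\ref{LIneq} become directly applicable. Part (i) is then purely formal: setting $y:=n/p-\de_p$, one has $C_m^p(n)=p^m\binom{y}{m}$, while $C_m^p(n-p)=p^m\binom{y-1}{m}$ and $pC_{m-1}^p(n-p)=p\cdot p^{m-1}\binom{y-1}{m-1}=p^m\binom{y-1}{m-1}$. Adding the last two expressions and invoking Pascal's identity $\binom{y-1}{m}+\binom{y-1}{m-1}=\binom{y}{m}$, valid for the generalized binomial coefficient, recovers $C_m^p(n)$ at once.

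For part (ii) I would first clear denominators to turn the claim into a polynomial inequality. Writing $x:=n/p-\de_p$, we have $C_m^p(n)=p^m\binom{x}{m}$, $C_m^p(n-1)=p^m\binom{x-1/p}{m}$ and $C_{m-1}^p(n-1)=p^{m-1}\binom{x-1/p}{m-1}$. Dividing the desired inequality by $p^{m-1}>0$ and multiplying by $m!>0$ transforms it into
$$
p\prod_{i=0}^{m-1}(x-i)\ \leq\ p\prod_{i=0}^{m-1}\Big(x-i-\tfrac1p\Big)+m\prod_{i=0}^{m-2}\Big(x-i-\tfrac1p\Big).
$$
Factoring the common product $\prod_{i=0}^{m-2}(x-i-1/p)$ out of the right-hand side collapses it to $\big[\,p\big(x-(m-1)-\tfrac1p\big)+m\,\big]\prod_{i=0}^{m-2}(x-i-\tfrac1p)$.

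The key step is then to recognize this bound as exactly the conclusion of Lemma~\ref{LIneq} applied with $a=x=n/p-\de_p$, $k=m-1$ and $q=p$: that Lemma gives $\big(x-(m-1)+\tfrac{m-1}{p}\big)\prod_{i=0}^{m-2}(x-i-\tfrac1p)$ as an upper bound for $\prod_{i=0}^{m-1}(x-i)$, and multiplying through by $p$ produces the linear coefficient $px-p(m-1)+(m-1)$, which coincides with the collapsed bracket since $p\big(x-(m-1)-\tfrac1p\big)+m=px-p(m-1)+(m-1)$. It then remains to verify the hypotheses of Lemma~\ref{LIneq}: $q=p\geq 1$ and $k=m-1\geq 0$ are immediate, while the condition $a\geq k$ reads $n/p-\de_p\geq m-1$, i.e. $n\geq p(\de_p+m-1)$, which is precisely the standing assumption of (ii). The only real care required — and the main, if minor, obstacle — is the bookkeeping that matches the two linear factors and confirms that the inequality direction is preserved under the positive scalings; once that is in place, (ii) follows from Lemma~\ref{LIneq} with no further estimation.
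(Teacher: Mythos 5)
Your proposal is correct and follows essentially the same route as the paper: part (i) via Pascal's identity for the generalized binomial coefficient after the substitution $y=n/p-\de_p$, and part (ii) by clearing denominators, factoring out the common product, and invoking Lemma~\ref{LIneq} with $a=n/p-\de_p$, $k=m-1$, $q=p$ (your normalization by $p^{m-1}$ rather than $p^m$ is only a cosmetic rescaling). No gaps.
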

\begin{proof}
(i) follows from 
\begin{align*}
\frac{C_{m}^p(n)}{p^m}=\binom{n/p-\de_p}{m}&=\binom{n/p-\de_p-1}{m}+\binom{n/p-\de_p-1}{m-1}
\\
&=\binom{(n-p)/p-\de_p}{m}+\binom{(n-p)/p-\de_p}{m-1}
\\
&=\frac{C_{m}^p(n-p)}{p^m}+\frac{C_{m-1}^p(n-p)}{p^{m-1}}.
\end{align*}

(ii) Note that 
\begin{align*}
&C_{m}^p(n-1)+C_{m-1}^p(n-1)
\\
=\,\,&\frac{1}{m!}\prod_{i=0}^{m-1}(n-1-(\de_p+i)p)+\frac{1}{(m-1)!}\prod_{i=0}^{m-2}(n-1-(\de_p+i)p)
\\
=\,\,&\frac{1}{m!}((n-1-(\de_p+m-1)p)+m)\prod_{i=0}^{m-2}(n-1-(\de_p+i)p).
\end{align*}
Multiplying by $m!$ and dividing by $p^m$, it suffices to prove that 
$$
\prod_{i=0}^{m-1}\left(\frac{n}{p}-\de_p-i\right)\leq
\left(\frac{n}{p}-\de_p-m+1+\frac{m-1}{p}\right)\prod_{i=0}^{m-2}\left(\frac{n}{p}-\de_p-i-\frac{1}{p}\right)
$$
This holds by Lemma~\ref{LIneq} with $a=\frac{n}{p}-\de_p$, $k=m-1$ and $q=p$. 
\end{proof}

\subsection{Proof of Theorem~\ref{TA}}

\begin{Lemma} \label{LJames} {\rm \cite{JamesDim}} 
Let $1\leq m\leq 4$, $\mu\in \Par_p(m)$, and $n$ be such that $(n-m,\mu)\in\Par_p(n)$. Then 
$$
\dim D^{(n-m,\mu)}\geq 
\left\{
\begin{array}{ll}
n-2 &\hbox{if $m=1$,}\\
(n^2-5n+2)/2 &\hbox{if $m=2$,}\\
(n^3 -9n^2 +14n)/6 &\hbox{if $m=3$.}
\\(n^4-14n^3+47n^2-34n)/24 &\hbox{if $m=4$.}
\end{array}
\right. 
$$
\end{Lemma}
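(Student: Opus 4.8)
The plan is to treat the four values $m=1,2,3,4$ by a finite case analysis: for each $p$-regular $\mu\vdash m$ I would pin down $\dim D^{(n-m,\mu)}$ as an explicit function of $n$, and then minimise over $\mu$. Since $m\le 4$ there are at most five partitions $\mu$ in each case, so once the decomposition data is in hand the combinatorics is completely finite.

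First I would write down $\dim S^{(n-m',\mu'')}$ for every $\mu''\vdash m'$ with $m'\le m$ using the hook length formula; each is a polynomial in $n$ of degree $m'$ with leading term $n^{m'}/m'!$. Since $D^{(n-m,\mu)}=S^{(n-m,\mu)}/\rad S^{(n-m,\mu)}$ is the head of the Specht module, the remaining composition factors of $S^{(n-m,\mu)}$ lie among the $D^{\kappa}$ with $\kappa\gdom(n-m,\mu)$ in the dominance order, so
$$
\dim D^{(n-m,\mu)}=\dim S^{(n-m,\mu)}-\sum_{\kappa\gdom(n-m,\mu)}[\,S^{(n-m,\mu)}:D^{\kappa}\,]\cdot\dim D^{\kappa}.
$$
These dominance-larger $\kappa$ are themselves of the form $(n-m',\mu'')$ with $m'\le m$, so running the identity downward through the (finite) dominance order, with base case $D^{(n)}$ the trivial module, expresses $\dim D^{(n-m,\mu)}$ as an explicit $\Z$-linear combination of Specht dimensions.

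The substance is in the decomposition numbers $[\,S^{(n-m',\nu)}:D^{(n-m'',\mu'')}\,]$. The key point is that, although the $p$-weight of $(n-m',\mu'')$ grows with $n$, each such number depends only on the partitions involved and on the residue of $n$ modulo $p$ once $n$ is large: the long first row contributes no composition factors beyond what its length mod $p$ forces. I would establish this stability inductively from the branching rules of Lemma~\ref{Lemma39}: restricting $D^{(n-m,\mu)}$ to $\SSS_{n-1}$ removes either the node ending the first row (producing the $m$-case at $n-1$) or a good node of $\mu$ (producing an $(m-1)$-case at $n-1$), so the relevant decomposition data propagates from $n-1$ to $n$ and from $m-1$ to $m$. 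For $m\le 4$ this leaves only a short explicit list of decomposition numbers to record, case by case on $p$ and on $n$ modulo $p$.

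Finally, with the explicit formulas for $\dim D^{(n-m,\mu)}$ in each residue class in hand, I would minimise over all $p$-regular $\mu\vdash m$ and over $n\bmod p$; the minimum in the four cases is exactly the stated polynomial. The main obstacle is precisely this last bookkeeping together with the decomposition-number input: one must verify uniformly across all primes and all congruence classes that no partition $(n-m,\mu)$ gives a dimension below the claimed bound, and it is the small-prime, extremal $\mu$ (for $p=2,3$) that make the estimate sharp and hence force the delicate casework.
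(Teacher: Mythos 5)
The paper does not actually prove this lemma: it is quoted directly from James \cite{JamesDim}, so the only meaningful comparison is with James's original argument, and your outline is essentially a reconstruction of it. James likewise writes $\dim D^{(n-m,\mu)}$ as $\dim S^{(n-m,\mu)}$ minus the contributions of the composition factors $D^{\kappa}$ with $\kappa\gdom(n-m,\mu)$, computes the Specht dimensions by hook lengths, and uses the explicitly known, row-stable decomposition numbers (depending only on the small partitions involved and on $n\bmod p$) to obtain exact dimension formulae for the $D^{(n-m',\mu')}$ with $m'\le 4$ in each congruence class, of which the stated polynomials are the minima. So your strategy is the right one and is, in substance, the one actually used.

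The one step you should not wave at is the decomposition-number input. Obtaining these numbers \emph{inductively from the branching rules of Lemma~\ref{Lemma39}} is not automatic: the modular branching rules describe restrictions of the $D^\lambda$, not of the $S^\lambda$, and branching data alone does not in general determine decomposition numbers. For $m\le 4$ it can be made to work by playing the Specht filtration of $S^\lambda\da_{\SSS_{n-1}}$ against Lemma~\ref{Lemma39} and inducting, but this is precisely where the real content of \cite{JamesDim} lies (James in effect imports his explicit computations for two-row and near-two-row partitions), and your proposal defers it entirely. A second, minor, point: the stabilisation you invoke holds only for $n$ large relative to $m$ and $p$, whereas the lemma is asserted for every $n$ with $(n-m,\mu)$ $p$-regular; for the finitely many small $n$ one must either observe that the claimed polynomials are non-positive there or read the dimensions off the known decomposition matrices, as the surrounding paper itself does when it falls back on \cite{JamesBook} for $n=9,10$.
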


\begin{Theorem} 
Let $m\geq 4$, $n\geq p(\de_p+m-2)$, $\mu\in \Par_p(m)$, and suppose that $\la:=(n-m,\mu)\in\Par_p(n)$. Then 
$\dim D^\la\geq C_{m}^p(n)$. 
\end{Theorem}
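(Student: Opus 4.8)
The plan is to argue by induction on $m$, using James's explicit estimates (Lemma~\ref{LJames}) to anchor the base case $m=4$ and the modular branching rules of Lemma~\ref{Lemma39} to run the induction, with the two recursions for $C_{m}^p(n)$ in Lemma~\ref{LBinom} as the arithmetic engine. The basic mechanism is that $\dim D^\la=\dim\big(D^\la\da_{\SSS_{n-1}}\big)=\sum_{i}\dim e_iD^\la$, so $\dim D^\la$ is bounded below by the sum of the dimensions of \emph{any} chosen set of composition factors of the restriction; by Lemma~\ref{Lemma39}(ii) these factors are of the form $D^{\la_A}$ for normal nodes $A$. For the base case I would verify, for $p\geq 3$, that James's quartic lower bound dominates $C_{4}^p(n)$ wherever the latter is positive (the discrepancy occurs only in the range where $C_4^p(n)\le 0$), while for $p=2$ I would supplement James by the exponential bound of Theorem~\ref{TC} in the short range where James's estimate is too weak.

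For the generic inductive step, first note that when $p(\de_p+m-2)\le n<p(\de_p+m-1)$ the product defining $C_m^p(n)$ contains a nonpositive factor, so $C_m^p(n)\le 0$ and the inequality is vacuous; this serves as the base of a secondary induction on $n$. Now let $\la=(n-m,\mu)$ with $n\geq p(\de_p+m-1)$. The topmost removable node of $\la$ is read last along the rim, so its sign survives in the reduced signature and it is always normal; for $n-m>\mu_1$ it lies in the first row and removing it produces the factor $D^{(n-1-m,\mu)}$. If moreover $\la$ is \emph{not} a JS partition, then by the remark following Lemma~\ref{Lemma39} there is a second normal node, necessarily inside $\mu$, whose removal gives a factor $D^{(n-m,\mu')}$ with $\mu'\in\Parp(m-1)$. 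Feeding these into the inductive hypotheses (on $n$ for the first factor, on $m$ for the second, where $m-1\geq 4$ lets us invoke the theorem itself) yields $\dim D^\la\geq C_m^p(n-1)+C_{m-1}^p(n-1)$, and Lemma~\ref{LBinom}(ii) closes the step.

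The one delicate point, and the place I expect to spend almost all the effort, is the JS case. When $\la$ is JS the restriction $D^\la\da_{\SSS_{n-1}}$ is the single irreducible module $D^{(n-1-m,\mu)}$, so only the $C_m^p$ term survives and the crucial $C_{m-1}^p$ contribution is lost; yet from $\dim D^\la=\dim D^{(n-1-m,\mu)}$ one must still beat $C_m^p(n)>C_m^p(n-1)$, so merely invoking the inductive hypothesis for $(n-1-m,\mu)$ is \emph{not} enough. Such partitions genuinely occur for arbitrarily large $n$ (already for two-row $\mu$, where $(n-m,\mu)$ is JS for exactly one residue of $n$ modulo $p$), so this case cannot be avoided. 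My plan here is to iterate the restriction over a full residue period, reducing $(n-m,\mu)$ down to $(n-p-m,\mu)$ and harvesting the $(m-1)$-type factors produced at the non-JS levels in between, then to match $C_m^p(n)=C_m^p(n-p)+p\,C_{m-1}^p(n-p)$ from Lemma~\ref{LBinom}(i); the fast (at least cubic, since $m\geq 4$) growth of $C_{m-1}^p$, controlled through Lemma~\ref{LIneq}, is what one hopes will let the harvested factors at the larger degrees compensate for the single factor omitted at the JS level. The main obstacle is precisely that this compensation is tight: one must bound both the number and the position of JS levels in each period for arbitrary $\mu$ and, most likely, replace the plain bound $\dim D^\la\geq C_m^p(n)$ by a strengthened inductive statement that records how many JS levels sit immediately above the current partition, so that the equality forced at JS levels is carried along rather than discarded.
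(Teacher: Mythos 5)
Your overall architecture --- double induction anchored at $m=4$ via Lemma~\ref{LJames}, the vacuous range where $C_m^p(n)\le 0$, the non-JS step producing $C_m^p(n-1)+C_{m-1}^p(n-1)$ and closing with Lemma~\ref{LBinom}(ii), and the plan to handle JS partitions by restricting over a full period of $p$ steps and matching $C_m^p(n)=C_m^p(n-p)+p\,C_{m-1}^p(n-p)$ from Lemma~\ref{LBinom}(i) --- coincides with the paper's proof. But the JS case, which you yourself flag as the place where all the effort lies, is left genuinely open: you do not actually produce the required $p$ copies of an $(m-1)$-type composition factor at level $n-p$, and you speculate that one may need to strengthen the induction hypothesis to track JS levels. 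No strengthening is needed. The idea you are missing is that the JS condition itself pins down the residues: writing $A_t:=(1,\la_1+1-t)$ and $\la^{(t)}:=(\la_1-t,\la_2,\dots)$, if $A_0$ is the unique normal node of $\la$ and $B$ is the second removable node, then JS-ness forces $\res B=\res A_0$, while $\res A_p=\res A_0$ holds automatically, so $B$ re-enters the picture with the \emph{same} residue exactly at the end of the period. Assuming $\la_1-\la_2>p$ (so that $A_1,\dots,A_p$ all stay in the first row), successive application of Lemma~\ref{Lemma39}(ii) shows that $D^\la\da_{\SSS_{n-p+1}}$ contains $D^{\la^{(p-1)}}$ together with at least $p-2$ copies of $D^{(\la_B)^{(p-2)}}$, and at the final step the multiplicity $[e_iD^{\la^{(p-1)}}:D^{(\la_B)^{(p-1)}}]$ equals $2$ because $A_p$ is an $i$-normal node above $B$; in total $D^\la\da_{\SSS_{n-p}}$ contains $D^{\la^{(p)}}$ and at least $p$ copies of $D^{(\la_B)^{(p-1)}}$, which is exactly what Lemma~\ref{LBinom}(i) demands --- with no slack and no auxiliary bookkeeping.

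You also do not address the residual JS configurations with $\la_1-\la_2\le p$, where the period argument breaks down because the restriction leaves the first row too early. For $p\ge 3$ the chain $p\ge\la_1-\la_2\ge n-2m>(p-2)m-p$ forces $p=3$, $m=5$, $\la=(8,5)$, which happens not to be JS; but for $p=2$ one genuinely hits $\la=(m+2,m)$, a basic spin module where $\dim D^\la=2^m$ must be compared with $C^2_m(2m+2)=(2m)!!/m!$ by hand --- and this is an \emph{equality}, so no soft argument can absorb it --- and $\la=(m+1,m-1,1)$, handled by a direct multiplicity-two branching computation. Similarly, your generic step tacitly assumes $\la_1-\la_2\ge 2$, which fails for $p=2$ at $\la=(m+1,m)$, again a basic spin module needing separate verification. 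These explicit exceptional cases, together with the residue computation in the JS period argument, are the actual content of the proof beyond the framework you set up.
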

\begin{proof}
If $p(\de_p+m-2)\leq n\leq  p(\de_p+m-1)$, we have $C_{m}^p(n)\leq 0$ and there is nothing to prove. So we assume that $n> p(\de_p+m-1)$. 

Let $m=4$ and set $f(n):=(n^4-14n^3+47n^2-34n)/24$, see Lemma~\ref{LJames}. If $p\geq 3$ then $n> p(\de_p+m-1)\geq 9$ and $f(n)\geq C_m^p(n)$, and so we are done in this case. If $p=2$, then $n> p(\de_p+m-1)\geq 8$, while $f(n)\geq C_m^p(n)$ for $n>10$. For $n=9$ and $10$, the claimed dimension bound holds by inspection of \cite[Tables]{JamesBook}. 

So, in addition to $n> p(\de_p+m-1)$ we now assume that $m\geq 5$. We apply induction on $n$. 
Note that $n> p(\de_p+m-1)$ implies $n-2m>1$, unless $p=2$, in which case we have $n-2m\geq 1$.  Hence $\la_1-\la_2\geq 2$, unless $p=2$ and $\la=(m+1,m)$. In the exceptional case, $D^\la$ is the basic spin module of dimension $2^m$, and the bound boils down to $2^m\geq \frac{(2m-1)!!}{m!}$, which is easily checked. Thus we may assume that $\la_1-\la_2\geq 2$. Let $A=(1,\la_1)$ be the top removable node of $\la$. 

Suppose first that $\la$ is not JS. 
Then $A$ is not the only normal node of $\la$, so there exists a good node $B$ of $\la$ with $B\neq A$. Then $D^{\la_A}$ and $D^{\la_B}$ are composition factors of $D^\la\da_{\SSS_{n-1}}$. 
The inductive assumption applies to $D^{\la_A}$ to give $\dim D^{\la_A}\geq C^p_m(n-1)$. Since $m\geq 5$, the inductive assumption applies to $D^{\la_B}$ to give 
$\dim D^{\la_B}\geq C^p_{m-1}(n-1)$. Now the result follows from Lemma~\ref{LBinom}(ii). 

Next, let $\la$ be JS, and let $B$ be the second removable node from the top. Suppose first that $\la_1-\la_2>p$ and for $t=0,1,2, \dots,p$, set $A_t:=(1,\la_1+1-t)$. We denote 
$$
\la^{(t)}:=(\la_1-t,\la_2,\la_3,\dots)=(\dots(\la_{A_1})_{A_2}\dots)_{A_{t}}\qquad(1\leq t\leq p).
$$
As $\la$ is JS, we have 
$D^\la\da_{\SSS_{n-1}}\cong D^{\la^{(1)}}$. As $\la$ is JS, we have $\res B=\res A_0=\res A_p$. So successive application of the branching rules implies that $D^\la\da_{\SSS_{n-p+1}}$ contains composition factors $D^{\la^{(p-1)}}$ and $D^{(\la_B)^{(p-2)}}$, the second one with multiplicity at least $p-2$. Modular branching rules now imply that $[D^{\la^{(p-1)}}\da_{\SSS_{n-p}}:D^{(\la_B)^{(p-1)}}]=2$, and so we deduce that $D^\la\da_{\SSS_{n-p}}$ contains composition factors $D^{\la^{(p)}}$ and $D^{(\la_B)^{(p-1)}}$, the second one with multiplicity at least $p$. Now result follows from the inductive assumption and Lemma~\ref{LBinom}(i). 

Thus we may assume that $\la$ is JS, and $\la_1-\la_2\leq p$. If $p\geq3$, we deduce
$$
p\geq \la_1-\la_2\geq n-2m>p(\de_p+m-1)-2m=p(m-1)-2m=(p-2)m-p,
$$
implying $p=3$, $m=5$ and $n=13$, hence $\la=(8,5)$, which is not JS. 

Finally, let $p=2$. 
Then $\la_1-\la_2=2$ since $\la$ is JS. The assumption $n>p(\de_p+m-1)=2m$ now implies that $\la=(m+2,m)$ or $\la=(m+1,m-1,1)$. In the first case, $\la$ is a basic spin module of dimension $2^m$, and the required  bound boils down to $2^m\geq \frac{(2m)!!}{m!}$, which is actually an equality! In the second case we have $\la=(m+1,m-1,1)$. By the modular branching rules, $D^{(m,m-2,1)}$ appears in $D^\la\da_{\SSS_{n-2}}$ with multiplicity at least $2$, and the result follows from
$$
2C^2_{m-1}(n-2)=2\frac{(2m-3)!!}{(m-1)!}> \frac{(2m-1)!!}{m!}=C^2_m(n).
$$
The theorem is proved.
\end{proof}

\begin{Remark}
Some other lower bounds on the dimensions of irreducible modular representations of $\SSS_n$ were obtained in \cite{Mu}, based on an improved 
version \cite[Theorems (5.2), (5.6)]{Mu} of James' \cite[Lemma 4]{JamesDim}.
\end{Remark}

\subsection{Proof of Theorems~\ref{TB} and \ref{TC}}
\begin{Lemma}\label{L1}
Let $\la\in\Par_p(n)$. Then 
$$\dim D^\la\geq\prod_{i\geq p} \lceil{i/(p-1)}\rceil^{\la_i}.$$ 
In particular, 
$$\dim D^\la\geq 2^{n-\la_1-\ldots-\la_{p-1}}.$$
\end{Lemma}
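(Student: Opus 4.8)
The plan is to prove the first (stronger) bound by induction on $n=|\la|$ and then read off the second. The second follows from the first by the elementary observation that $\lceil i/(p-1)\rceil\ge 2$ for every $i\ge p$, whence
$$\prod_{i\ge p}\lceil i/(p-1)\rceil^{\la_i}\ \ge\ 2^{\sum_{i\ge p}\la_i}\ =\ 2^{\,n-\la_1-\cdots-\la_{p-1}}.$$
So I only need the product bound, and the base case $\la=\emptyset$ is trivial.

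The engine of the induction is a quantitative branching inequality extracted from Lemma~\ref{Lemma39}(ii). If $\eps_i(\la)>0$ and $G$ is the $i$-good node, then $G$ is the lowest $i$-normal node, so all of the remaining $\eps_i(\la)-1$ normal nodes lie above it; hence $[e_iD^\la:D^{\tilde e_i\la}]=\eps_i(\la)$ and therefore $\dim e_iD^\la\ge \eps_i(\la)\dim D^{\tilde e_i\la}$. Since $D^\la\da_{\SSS_{n-1}}=\bigoplus_i e_iD^\la$, this gives, for any $i$ with $\eps_i(\la)>0$,
$$\dim D^\la\ \ge\ \eps_i(\la)\,\dim D^{\tilde e_i\la}.$$
The inductive step then reduces to a purely combinatorial statement, the \emph{key claim}: every nonempty $p$-regular $\la$ has a residue $i$ whose $i$-good node lies in some row $r$ with $\eps_i(\la)\ge\lceil r/(p-1)\rceil$. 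Granting this, let $G$ (in row $r$) be that good node; since $G$ is a removable corner, $\tilde e_i\la$ is obtained from $\la$ by deleting one box from the end of row $r$, so the target product for $\la$ equals $\lceil r/(p-1)\rceil$ times the one for $\tilde e_i\la$ (with the convention that rows $r\le p-1$ contribute the factor $\lceil r/(p-1)\rceil=1$). Combining the displayed inequality with the inductive hypothesis for $\tilde e_i\la$ yields $\dim D^\la\ge \eps_i(\la)\dim D^{\tilde e_i\la}\ge \lceil r/(p-1)\rceil\prod_{j\ge p}\lceil j/(p-1)\rceil^{(\tilde e_i\la)_j}=\prod_{j\ge p}\lceil j/(p-1)\rceil^{\la_j}$, closing the induction. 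The conceptual point is that the ``deep'' factors need not all be paid off at the bottom row at once: the factor $\lceil r/(p-1)\rceil$ charged to a box of row $r$ may be realized at a later stage of the reduction, after enough boxes above it have been stripped away, so an inductive amortization is exactly what is needed.

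The hard part is the key claim, and I expect it to be the main obstacle. The natural starting point is that the top removable node $A=(r_0,\la_{r_0})$, with $r_0$ minimal subject to $\la_{r_0}>\la_{r_0+1}$, is always $i$-normal for $i=\res A$: $p$-regularity forces at most $p-1$ equal parts, so $r_0\le p-1$, and the only addable node above $A$ is $(1,\la_1+1)$, whose residue $\la_1\equiv\la_{r_0}$ differs from $\res A=\la_{r_0}-r_0$ because $1\le r_0\le p-1$ is not divisible by $p$; hence $A$ is the topmost node of its residue in the signature and its minus is never cancelled. Thus this residue always has a normal node in a shallow row $r_0\le p-1$. If its good (lowest) normal node is also shallow, the claim holds with factor $1$; the difficulty is the case in which this residue, or indeed every residue, has only deep good nodes. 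Here one must show that $p$-regularity forces enough same-residue normal nodes. The mechanism is a tension in the reduced $i$-signature: between two $i$-removable corners the diagonal value $\la_r-r$ drops by a positive multiple of $p$, so the intervening shape must either create additional $i$-removable corners (raising $\eps_i$) or else produce an addable $i$-node that cancels the deeper corner (and thereby raises the good node). Converting this dichotomy into the precise inequality $\eps_i(\la)\ge\lceil r/(p-1)\rceil$ — controlling all cancellations in the signature simultaneously — is the crux; I expect to carry it out by an extremal argument on the lowest block of at most $p-1$ equal parts, choosing the residue that removes the lowest uncancelled corner and bounding the number of surviving minuses above it by counting corners in the top $r$ rows, of which $p$-regularity guarantees at least $\lceil r/(p-1)\rceil$.
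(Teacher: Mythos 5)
Your reduction of the second inequality to the first, and your branching inequality $\dim D^\la\geq \eps_i(\la)\dim D^{\tilde e_i\la}$ (via Lemma~\ref{Lemma39}(ii) applied to the good node, which is indeed the lowest $i$-normal node in the paper's conventions), are both correct, and the inductive bookkeeping with the factor $\lceil r/(p-1)\rceil$ is sound. But the entire content of the lemma has been pushed into your ``key claim'' --- that some residue $i$ has its good node in a row $r$ with $\eps_i(\la)\geq\lceil r/(p-1)\rceil$ --- and you do not prove it; you only describe a strategy (``I expect to carry it out by an extremal argument\dots''). That claim is genuinely delicate: the good node is the \emph{lowest} $i$-normal node, so it can sit far below the shallow normal node you exhibit, and you would have to control all cancellations in the reduced signature between them. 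Counting corners in the top $r$ rows is not enough, since those corners have varying residues and many of their minuses can be erased by intervening addable nodes. As it stands the proof is incomplete.

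The gap is also avoidable: you have made the combinatorics harder than necessary by insisting on the good node. Lemma~\ref{Lemma39}(ii) gives the multiplicity $[e_iD^\la:D^{\la_A}]$ for \emph{any} $i$-normal node $A$ with $\la_A$ $p$-regular, so it suffices to produce a normal node $A$ in row $r$ with at least $\lceil r/(p-1)\rceil-1$ same-residue normal nodes above it. The paper takes $A=A_j$, the highest removable node whose removal leaves a $p$-regular partition. If $\la_{A_k}$ fails to be $p$-regular for $k<j$, then rows $r_k+1,\dots,r_k+p-1$ all equal $\la_{r_k}-1$, forcing $r_{k+1}=r_k+p-1$; hence $r_1\leq p-1$, $(j-1)(p-1)<r_j\leq j(p-1)$, the nodes $A_1,\dots,A_j$ all have the same residue, and no addable node of that residue lies between or above them, so all $j$ are normal. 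This yields $[D^\la\da_{\SSS_{n-1}}:D^{\la_{A_j}}]=j=\lceil r_j/(p-1)\rceil$ with essentially no analysis of the reduced signature, and the induction closes exactly as in your outline. I recommend replacing your key claim with this choice of node.
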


\begin{proof}
Let $A_1,A_2,\ldots$ be the removable nodes counting from the top and let $A=A_j$ be minimal such that $\la_{A_j}$ is $p$-regular. If $A_j$ is on row $i$ then $(j-1)(p-1)<i\leq j(p-1)$ and nodes $A_1,\ldots,A_j$ are all normal of the same residue. So 
$$[D^\la\da_{\SSS_{n-1}}:D^{\la_A}]=j=\lceil i/(p-1)\rceil,$$ 
from which the lemma follows by induction.
\end{proof}

\begin{Lemma}
Let $a,b\geq 0$ with $a-b\geq p-1$. Then $\dim D^{(a,b)}\geq 2^b$.
\end{Lemma}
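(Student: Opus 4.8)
The plan is to prove the bound by induction on $b$, treating all $a$ with $a-b\geq p-1$ simultaneously. The base case $b=0$ is immediate, since $D^{(a)}$ is the trivial module of dimension $1=2^0$. For $p=2$ the statement is anyway the special case $2^{n-\la_1}=2^b$ of the ``in particular'' bound in Lemma~\ref{L1}, so throughout I would assume $p\geq 3$.

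The first move is to reduce to the extremal case $a-b=p-1$. For any two-row $p$-regular partition the top removable node $A=(1,a)$ is automatically normal: its residue is $a-1$, and the only rim node lying above it is the addable node $(1,a+1)$, whose residue $a$ differs from $a-1$, so the minus that $A$ contributes is the topmost sign of its residue and is therefore never cancelled. Provided $(a-1,b)$ is still $p$-regular (which holds as long as $a-1>b$), Lemma~\ref{Lemma39}(ii) then shows $D^{(a-1,b)}$ is a composition factor of $D^{(a,b)}\da_{\SSS_{n-1}}$, so $\dim D^{(a,b)}\geq\dim D^{(a-1,b)}$. Iterating this down to $a=b+p-1$ reduces the claim to partitions with $a-b=p-1$.

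So suppose $a=b+p-1$. The decisive observation is that now the two removable nodes $(1,a)$ and $(2,b)$ share a residue $i:=b-2$, since $a-1=b+p-2\equiv b-2\pmod{p}$. The only rim nodes of residue $i$ are these two removable ones, contributing two minuses, together with the addable node $(3,1)$ of residue $p-2$ exactly when $b\equiv 0\pmod{p}$; in that case its plus lies below both minuses (row $3$ is read first), giving signature $+\,-\,-$, in which no $-+$ adjacency occurs. Hence in every case both removable nodes are $i$-normal, the good node is $(2,b)$ with $\tilde e_i(a,b)=(a,b-1)$, and since $(1,a)$ is an $i$-normal node above $(2,b)$, Lemma~\ref{Lemma39}(ii) gives $[e_iD^{(a,b)}:D^{(a,b-1)}]=2$. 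As $e_iD^{(a,b)}$ is a summand of $D^{(a,b)}\da_{\SSS_{n-1}}$, this yields $\dim D^{(a,b)}\geq\dim e_iD^{(a,b)}\geq 2\dim D^{(a,b-1)}$. Because $(a,b-1)$ satisfies $a-(b-1)=p\geq p-1$, the inductive hypothesis gives $\dim D^{(a,b-1)}\geq 2^{b-1}$, and the desired bound $\dim D^{(a,b)}\geq 2^b$ follows.

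The step I expect to demand the most care is the reduced $i$-signature computation in the extremal case, namely confirming that neither removable minus is erased. The only mechanism that could erase one is an addable $i$-node sitting to its right, and the sole such candidate is $(3,1)$; verifying that it occurs only when $p\mid b$ and that it then lies strictly below both minuses (so it produces $+\,-\,-$ rather than a $-+$ pair) is the crux. The reduction in the second paragraph and the inductive bookkeeping are otherwise routine.
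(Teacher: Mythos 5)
Your proof is correct and follows essentially the same route as the paper's: reduce along the first row when $a-b>p-1$, and use the multiplicity-$2$ occurrence of $D^{(a,b-1)}$ in the restriction when $a-b=p-1$, then induct. The paper simply asserts these two branching facts (and also notes an alternative derivation from Lemma~\ref{L1} together with \cite[Lemma 2.3]{bkz}), whereas you verify them explicitly via the reduced $i$-signature computation, which checks out.
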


\begin{proof}
If $a-b>p-1$ then $D^{(a-1,b)}$ is a composition factor of $D^{(a,b)}\da_{\SSS_{a+b-1}}$, while if $a-b=p-1$ then $D^{(a,b-1)}$ is a composition factor with multiplicity $2$ of $D^{(a,b)}\da_{\SSS_{a+b-1}}$. The lemma then follows.

Alternatively, the lemma follows from Lemma \ref{L1} and \cite[Lemma 2.3]{bkz}.
\end{proof}

\begin{Lemma}
Let $\la\in\Par_p(n)$. If $\la_1\geq p-1$ and $((\la_1)^\Mull,(\la_2,\la_3,\ldots)^\Mull)\in\Par_p(n)$ then $\dim D^\la\geq 2^{n-\la_1}$.
\end{Lemma}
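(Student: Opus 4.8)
The plan is to replace $\la$ by its Mullineux conjugate and then quote Lemma~\ref{L1}. Since $\sgn$ is one-dimensional and $D^\la\otimes\sgn\cong D^{\la^\Mull}$, we have $\dim D^\la=\dim D^{\la^\Mull}$, so by Lemma~\ref{L1} applied to $\la^\Mull$,
\[
\dim D^\la=\dim D^{\la^\Mull}\geq 2^{\sum_{i\geq p}(\la^\Mull)_i}.
\]
Hence it suffices to show that $\la^\Mull$ has at least $n-\la_1$ boxes strictly below its $(p-1)$-st row, i.e. that $\sum_{i\geq p}(\la^\Mull)_i\geq n-\la_1$, or equivalently $\sum_{i=1}^{p-1}(\la^\Mull)_i\leq\la_1$.

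First I would pin down the role of the two hypotheses. Writing $\bar\la:=(\la_2,\la_3,\dots)$, the one-row partition $(\la_1)$ labels the trivial $\F\SSS_{\la_1}$-module, so $(\la_1)^\Mull$ labels $\sgn_{\SSS_{\la_1}}$ and therefore equals the $p$-regularization of the single column $(1^{\la_1})$; since $\la_1\geq p-1$, this regularization occupies exactly $p-1$ nonzero rows. Consequently, in $\mu:=((\la_1)^\Mull,\bar\la^\Mull)$ the first $p-1$ rows are precisely $(\la_1)^\Mull$ (of total size $\la_1$) and the rows below row $p-1$ are precisely $\bar\la^\Mull$. The heart of the lemma is the claim that the rows of $\la^\Mull$ lying strictly below row $p-1$ form exactly $\bar\la^\Mull$; granting this, $\sum_{i\geq p}(\la^\Mull)_i=|\bar\la^\Mull|=|\bar\la|=n-\la_1$, which is what we need. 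That $\mu$ is a genuine $p$-regular partition is precisely the condition making this correct: for $p=3$ and $\la=(2,1)$ one has $\la^\Mull=(3)$ and $\sum_{i=1}^{p-1}(\la^\Mull)_i=3>2=\la_1$, and indeed there $\mu=(1,1,1)$ is not $3$-regular while the desired bound $2^{n-\la_1}$ fails.

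To establish this claim I would induct on $|\bar\la|$ using the recursive description of the Mullineux map via the branching rules, $\eps_i(\la^\Mull)=\eps_{-i}(\la)$ and $(\tilde e_i\la)^\Mull=\tilde e_{-i}(\la^\Mull)$ for all $i$. The base case $\bar\la=\emptyset$ is the regularization computation above. For the inductive step I would remove an $i$-good node $A$ of $\la$; the matching node deleted from $\la^\Mull$ is its $(-i)$-good node, and the decisive point is that, under the standing hypotheses, $A$ lies strictly below the first row of $\la$ if and only if the $(-i)$-good node of $\la^\Mull$ lies strictly below row $p-1$. This makes box removals inside $\bar\la$ correspond to box removals below row $p-1$ of $\la^\Mull$, which propagates the claim. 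One must also verify that the pair of hypotheses ($\la_1\geq p-1$ and $\mu$ a partition) is inherited by $\tilde e_i\la$, so that the inductive hypothesis applies. I expect this signature bookkeeping---controlling how the long first row of $\la$ sits inside the reduced $i$-signatures and how that information crosses the Mullineux involution---to be the main obstacle; alternatively it can be read off from the combinatorics of the Mullineux symbol.
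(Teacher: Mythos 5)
Your reduction---pass to $\la^\Mull$ via $\dim D^\la=\dim D^{\la^\Mull}$, apply Lemma~\ref{L1}, and boil everything down to the identity $\la^\Mull=((\la_1)^\Mull,(\la_2,\la_3,\ldots)^\Mull)$---is exactly the paper's proof, which simply combines Lemma~\ref{L1} with \cite[Lemma 2.2]{bkz}; the ``main obstacle'' you flag at the end is precisely that cited lemma, so none of the inductive signature bookkeeping needs to be carried out. If you did want to reprove it, note that your induction on $|\bar\la|$ would stall whenever the only good node of $\la$ sits in the first row (e.g.\ for JS partitions), so you would have to induct on $n$ instead; everything else, including your diagnosis of the role of the two hypotheses and the counterexample $\la=(2,1)$ for $p=3$, is correct.
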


\begin{proof}
The lemma follows from Lemma \ref{L1} and \cite[Lemma 2.2]{bkz}.
\end{proof}

The following result improves \cite[Theorem 5.1]{GLT}.

\begin{Theorem}
Let $p\geq 3$ and $\la\in\Par_p(n)$. Further let $k:=\max\{\la_1,\la^\Mull_1\}$ and
$a \in \Z_{>0}$ be minimal such that $D^\la\da_{\SSS_{n-a}}$ contains a submodule of dimension $1$. Then 
$$\dim D^\la\geq 2\cdot 3^{(\max\{n-k,a\}-2)/3}.$$
\end{Theorem}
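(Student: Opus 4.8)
The plan is to split the single inequality into two separate lower bounds and attack them by different means. Since the function $u\mapsto 2\cdot 3^{(u-2)/3}$ is increasing, we have $2\cdot 3^{(t-2)/3}=\max\{2\cdot 3^{(n-k-2)/3},\,2\cdot 3^{(a-2)/3}\}$ with $t=\max\{n-k,a\}$, so the asserted bound is \emph{equivalent} to the conjunction
$$\dim D^\la\geq 2\cdot 3^{(n-k-2)/3}\qquad\text{and}\qquad \dim D^\la\geq 2\cdot 3^{(a-2)/3}.$$
Throughout I would assume $\dim D^\la>1$, i.e. that $\la$ is neither the trivial nor the sign partition, the excluded cases being immediate. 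I will also use the elementary inequality $2^x\geq 2\cdot 3^{(x-2)/3}$, valid for all $x\geq 0$ because the quotient of the two sides is increasing in $x$ and already exceeds $1$ at $x=0$.

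First I would dispose of the $(n-k)$-bound. Because $\dim D^\la=\dim D^{\la^\Mull}$ and $k=\max\{\la_1,\la^\Mull_1\}$, after possibly replacing $\la$ by $\la^\Mull$ I may assume $\la_1=k\geq\la^\Mull_1$, so that $n-k=n-\la_1$. If $\la_1\geq p-1$ and $((\la_1)^\Mull,(\la_2,\la_3,\dots)^\Mull)\in\Par_p(n)$, then the Mullineux lemma above gives $\dim D^\la\geq 2^{n-\la_1}=2^{n-k}\geq 2\cdot 3^{(n-k-2)/3}$. The remaining configurations — $\la_1<p-1$, or the concatenation failing to be $p$-regular — force $\la$ (and hence its Mullineux image) to be sufficiently \emph{tall}, and there the full product form $\dim D^\la\geq\prod_{i\geq p}\lceil i/(p-1)\rceil^{\la_i}$ of Lemma~\ref{L1} already exceeds $2^{n-\la_1}$, while the two-row lemma above covers the genuinely two-rowed residual cases. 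I expect no conceptual difficulty here; it is a finite case analysis.

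The substance of the theorem is the $a$-bound, which I would prove by induction on $n$. The engine is the monotonicity statement that \emph{every} composition factor $D^\mu$ in the socle of $D^\la\da_{\SSS_{n-1}}$ satisfies $a(\mu)\geq a-1$: restricting an embedding $D^\mu\hookrightarrow D^\la\da_{\SSS_{n-1}}$ a further $a(\mu)$ steps and composing with a one-dimensional submodule of $D^\mu\da_{\SSS_{n-1-a(\mu)}}$ produces a one-dimensional submodule of $D^\la\da_{\SSS_{n-1-a(\mu)}}$, whence $a\leq a(\mu)+1$. If $\la$ is not JS, then by Lemma~\ref{Lemma39} its restriction either has two distinct socle constituents (from two normal nodes) or a socle constituent of multiplicity at least $2$; in both cases $\dim D^\la\geq 2\dim D^\mu$ for some socle constituent $\mu$ with $a(\mu)\geq a-1$, and the inductive bound $\dim D^\mu\geq 2\cdot 3^{(a-3)/3}$ yields $\dim D^\la\geq 4\cdot 3^{(a-3)/3}\geq 2\cdot 3^{(a-2)/3}$ since $2\geq 3^{1/3}$.

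The crux, and the step I expect to be the main obstacle, is the JS case, where $D^\la\da_{\SSS_{n-1}}=D^{\tilde e_i\la}$ is irreducible and the dimension does not grow. Here I would follow the forced JS chain $\la=\la^{(0)},\la^{(1)},\dots$ down to the first non-JS partition $\nu$; along this stretch the dimension is constant while $a$ drops by exactly one per step, so a per-step induction loses a factor $3^{1/3}$ each time and must be replaced by a global estimate for the whole stretch. For this I would argue as in the proof of Theorem~\ref{TA}: once $\la_1-\la_2>p$ the branching produces, $p$ levels down, a composition factor of multiplicity at least $p$, i.e. a single jump by a factor $p$ over $p$ steps, which, combined with Lemma~\ref{L1} (large precisely when the first-row gap is wide), absorbs the $3^{(a-2)/3}$ target. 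The delicate point is to make the accumulated growth beat the target \emph{uniformly in} $p$ — note $p^{1/p}<3^{1/3}$ for $p>3$ — which forces one to show that for a JS partition the slow-growth regime is exactly the regime $a\leq n-k$, so that the JS case is ultimately governed by, and reduces to, the already-established $(n-k)$-bound. Controlling the length of JS stretches against the quantity $n-k$ is where the real work lies.
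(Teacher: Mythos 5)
Your reduction of the theorem to the two separate bounds is logically fine, and your non-JS step coincides with the paper's Case~1, but both halves of your plan have genuine gaps. The decisive one is the JS case of the $a$-bound, exactly where you locate the difficulty. Your proposed engine --- gaining a factor of $p$ over $p$ restriction steps as in the proof of Theorem~\ref{TA} --- yields a per-step growth rate of $p^{1/p}$, which is strictly less than $3^{1/3}$ for every prime $p\geq 5$, so that induction cannot close; and your fallback, that ``the slow-growth regime is exactly the regime $a\leq n-k$,'' is asserted, not proved, and is not even literally true (for $p\mid n$ the partition $(n-1,1)$ is JS with $n-k=1$ and $a=2$). The paper's resolution is of a different nature and is the real content of the theorem: it is a \emph{structural} bound on the length of a JS stretch. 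If $\la$ is JS with good node $A$, then $\la_A$ has at least two normal nodes by \cite[Lemma 3]{JamesDim}; if it has exactly two, one passes to $D^\la\da_{\SSS_{n-2,2}}\cong(D^\mu\boxtimes D^{(2)})\oplus(D^\nu\boxtimes D^{(1^2)})$ and shows that $\mu$ and $\nu$ cannot both be JS --- for $p=3$ by a direct case analysis on $\la_1-\la_2$, and for $p\geq 5$ by invoking the Brundan--Kleshchev classification of irreducible restrictions to $\SSS_{n-3,3}$ \cite[Corollaries 3.9, 4.3]{BK}, which pins down the exceptions to $n\leq 5$ or $\la\in\{(n-1,1),(n-1,1)^\Mull\}$ with $p\mid n$. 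This guarantees at least three composition factors (with the right socle behaviour) three levels down, i.e.\ the dimension triples every three steps, matching $3^{1/3}$ exactly. Nothing in your proposal supplies this input.

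Two further points. First, your treatment of the $(n-k)$-bound does not go through as described: when $\la_1<p-1$ or the concatenation hypothesis of the Mullineux lemma fails, the product $\prod_{i\geq p}\lceil i/(p-1)\rceil^{\la_i}$ of Lemma~\ref{L1} can be trivial while $n-\la_1$ is large (e.g.\ $p=5$, $\la=(3,3,3)$ gives an empty product against a target of $2^{6}$), so ``the remaining configurations are a finite case analysis'' is not correct; note also that $2^{n-k}$ would be a far stronger bound than both \cite[Theorem 5.1]{GLT} and the present theorem, which should make you suspicious of obtaining it cheaply. Second, the paper avoids the split altogether: it runs a single induction on removing $b$ good nodes, using that $\max\{\mu_1,\mu_1^\Mull\}\leq k$ is preserved and that $a$ drops by at most $b$, so the one quantity $t=\max\{n-k,a\}$ decreases by at most $b$ and one uniform estimate suffices. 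That unified bookkeeping is what lets the $\SSS_{n-2,2}$ and $\SSS_{n-3,3}$ analysis above deliver the theorem in one pass, with no separate $(n-k)$ argument needed.
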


\begin{proof}
If $\la\in\{(n),(n)^\Mull\}$ then the statement clearly holds. So we will assume
that this is not the case.
If $\mu$ is obtained from $\la$ by removing a sequence of $b$ good nodes, then
$\mu^\Mull$ can also be obtained from $\la^\Mull$ by removing a sequence of $b$ good nodes.
In particular $\max\{\mu_1,\mu^\Mull_1\}\leq k$. Also if $D^\mu\da_{\SSS_{n-b-c}}$ contains a
submodule of dimension $1$ then $c\geq a-b$ by minimality of $a$. By induction
we can assume that $\dim D^\mu\geq 2\cdot 3^{(\max\{n-k,a\}-2-b)/3}$.

{\sf Case 1.} {\em $\la$ is not JS.} If $\eps_i(\la)\geq 2$ for some $i$ then $[D^\la\da_{\SSS_{n-1}}:D^{\tilde e_i \la}]\geq 2$ and $D^{\tilde e_i \la}\subseteq D^\la\da_{\SSS_{n-1}}$. Otherwise there exist $i\not=j$ with $\eps_i(\la),\eps_j(\la)=1$ and then $D^{\tilde e_i \la}\oplus D^{\tilde e_j \la}\subseteq D^\la\da_{\SSS_{n-1}}$. In either case
\[\dim D^\la\geq 4\cdot 3^{(\max\{n-k,a\}-3)/3}>2\cdot 3^{(\max\{n-k,a\}-2)/3}.\]

{\sf Case 2.} {\em $\la$ is JS.} Let $A$ be the top normal node of $\la$. Then $A$ is good in $\la$ and $D^\la\da_{\SSS_{n-1}}\cong D^{\la_A}$. From 
\cite[Lemma 3]{JamesDim} we have that $\la_A$ has at least $2$ normal nodes. If $\la_A$ has at least 3 normal nodes we can conclude similarly to the previous case that
\[\dim D^\la\geq 6\cdot 3^{(\max\{n-k,a\}-4)/3}>2\cdot 3^{(\max\{n-k,a\}-2)/3}.\]
So we may assume that $\la_A$ has exactly $2$ normal nodes. Further notice that $D^{(2)}$ and $D^{(1^2)}$ are both composition factors of $D^\la\da_{\SSS_2}$ since $\la\not\in\{(n),(n)^\Mull\}$. Since $p\geq 3$, it follows that 
$$D^\la\da_{\SSS_{n-2,2}}\cong (D^\mu\boxtimes D^{(2)})\oplus(D^\nu\boxtimes D^{(1^2)}),$$ where $\mu,\nu\in\Par_p(n-2)$ can each be obtained from $\la_A$ by removing a good node. In particular if $D^\pi\subseteq D^\la\da_{\SSS_{n-3}}$ then $\pi$ can be obtained from $\la$ by removing a sequence of $3$ good nodes. Also $\mu=\tilde e_i \la_A$ and $\nu=\tilde e_j\la_A$ with $i\not=j$.

If $\mu$ and $\nu$ are not both JS then similar to before
\[\dim D^\la\geq 6\cdot 3^{(\max\{n-k,a\}-5)/3}=2\cdot 3^{(\max\{n-k,a\}-2)/3}.\]

If $p\geq 5$ and $\mu$ and $\nu$ are both JS, then $D^\la\da_{\SSS_{n-3}}$ has only $2$ composition factors. From $$D^\la\da_{\SSS_{n-2,2}}\cong (D^\mu\boxtimes D^{(2)})\oplus(D^\nu\boxtimes D^{(1^2)})$$ it follows that either $D^\la\da_{\SSS_{n-3,3}}\cong (D^\pi\boxtimes D^{(2,1)})$ or $$D^\la\da_{\SSS_{n-3,3}}\cong (D^\psi\boxtimes D^{(3)})\oplus(D^\xi\boxtimes D^{(1^3)})$$ for certain partitions $\pi,\psi,\xi$. So from \cite[Corollary 3.9]{BK} with $k=3$ or from \cite[Corollary 4.3]{BK} we have that $n\leq 5$ or $p\mid n$ and $\la\in\{(n-1,1),(n-1,1)^\Mull\}$. The cases $n\leq 5$ can be checked separately. If $p\mid n$ and $\la\in\{(n-1,1),(n-1,1)^\Mull\}$ then $n-k=1$, $a=2$ and $\dim D^\la=n-2\geq 3>2$. 

So we can now assume that $p=3$. We will show that in this case $\mu$ and $\nu$ are not both JS, from which the lemma follows. From the previous part all normal node of $\la_A$ are good. So it is enough to show that that for a certain normal node $B$ of $\la_A$ we have that $(\la_A)_B$ is not JS.

{\sf Case 2.1.} {\em $\la_1\geq \la_2+3$.} 
If  $B:=(1,\la_1-1)$ then $B$ is normal in $\la_A$ and $(1,\la_1-2)$ and the second top removable node of $\la$ are normal in $(\la_A)_B$.

{\sf Case 2.2.} {\em $\la_1=\la_2+2$.} Then $\la$ is not JS.

{\sf Case 2.3.} {\em $\la_1=\la_2+1$.} Then $\la=(\la_1,\la_1-1,\la_3,\ldots)$ with $1\leq \la_3\leq\la_1-2$. If $B=(2,\la_1-1)$ then $B$ is normal in $\la_A$ and $(1,\la_1-1)$ and the third top removable node of $\la$ are normal in $(\la_A)_B$.

{\sf Case 2.4.} {\em $\la_1=\la_2$.} Then $\la=(\la_1^2,\la_3,\ldots)$ with $1\leq \la_3\leq \la_1-2$. If $B=(1,\la_1)$ then  $B$ is normal in $\la_A$ and $(2,\la_1-1)$ and the second top removable node of $\la$ are normal in $(\la_A)_B$.
\end{proof}

\end{document}